\newtheorem{theorem}{Theorem}
\newtheorem{corollary}{Corollary}
\newtheorem{lemma}{Lemma}
\newtheorem{claim}{Claim}
\newtheorem*{observation}{Observation}
\newcommand{\ff}[2]{{#1}^{\underline{#2}}}
\let\geq\geqslant
\let\leq\leqslant
\def\P{\mathcal{P}}
\def\Y{\mathcal{Y}}
\def\SY{\mathcal{SY}}
\def\M{\mathcal{M}}
\DeclareMathOperator\SYM{Sym}
\begin{document}
\title{Polynomial Approach to Explicit Formulae for Generalized Binomial Coefficients}
\author{F. Petrov}
\maketitle

\let\thefootnote\relax\footnote{
St. Petersburg Department of
V.~A.~Steklov Institute of Mathematics of
the Russian Academy of Sciences, St. Petersburg State University.
E-mail: fedyapetrov@gmail.com.
Supported by Russian Scientific Foundation grant 14-11-00581.}

\begin{abstract}
We extend the polynomial approach to hook length formula proposed in \cite{KNPV} to
several other problems of the same type, including number of paths formula in the
Young graph of strict partitions. 
\end{abstract}

Multivariate polynomial interpolation, or, on other language, explicit form of Alon's Combinatorial
Nullstellensatz \cite{L}, \cite{KP}, recently proved to be powerful in proving polynomial identities: in \cite{KP} it was
used for the direct short proof of Dyson's conjecture, later generalized in \cite{KN} for
the equally short proof of the $q$-version of Dyson's conjecture, then after additional
combinatorial work it allowed to prove identities of Morris, Aomoto, Forrester (the last was open),
their common generalizations, both in classical and $q$-versions.

Here we outline how this method works in classical theory of symmetric
functions,  ``self-proving'' polynomial identities corresponding to
counting paths in Young graph and ``strict Young graph'', or, in other words,
counting dimensions
of linear and projective representations of the symmetric groups.

We start with a general framework of paths in graded graphs.

\section{Graded graphs}

Let $G$ be a $\mathbb{Z}$-graded countable directed graph with the vertex set 
$V(G)=\sqcup_{i=-\infty}^{\infty} V_i$,
directed edges $(u,v)\in E(G)$ join vertices of consecutive levels: $u\in V_i$, $v\in V_{i+1}$
for some integer $i$.
In what follows  both indegrees and outdegrees of all vertices are bounded. This implies
that the number $P_G(v,u)$ of directed paths from $v$ to $u$ is finite
for any two fixed vertices $v,u$. This number is known as
\emph{generalized binomial coefficient} (usual
binomial coefficient appears for the Pascal triangle). 

Fix a positive integer $k$. Our examples are induced subgraphs of the lattice
$\mathbb{Z}^k$. Vertices are graded by the sum of coordinates,
edges correspond to increasing of any coordinate by 1. That is, indegree 
of any vertex equals its outdegree and equals $k$. It is 
convenient to
identify the vertex set $V$ with monomials $x_1^{c_1}\dots x_k^{c_k}$,
$(c_1,\dots,c_k)\in \mathbb{Z}^k$. Then any edge corresponds to a multiplying
of monomial  by some variable $x_i$. Finite linear combinations of elements of $V$
(with, say, rational coefficients, it is not
essential hereafter) form a ring $\mathbb{Q}[[x_1,\dots,x_k]]$
of Laurent polynomials in the variables $x_1,\dots,x_k$. Not necessary finite
linear combinations form a $\mathbb{Q}[[x_1,\dots,x_k]]$-module which we
denote $\Phi$. For monomial $v\in V$ and $\varphi\in \Phi$ we denote by $[v]\varphi$
a coefficient in $v$ of the series $\varphi$.

Define \textit{minimum} $\min(u,v)$ of two monomials
$u=\prod x_i^{a_i}$, $v=\prod x_i^{b_i}$, as $\min(u,w):=\prod x_i^{\min(a_i,b_i)}$.
If $\min(u,v)=u$ we say that monomial $v$ \textit{majorates} $u$.

Now we describe three examples of graded graphs for which 
there are known explicit formulae for generalized binomial coefficients.

1) Multidimensional Pascal graph $\P_k$. This is a subgraph of
$\mathbb{Z}^k$ formed by the vectors with integral non-negative
coordinates (or, in monomial language, by monomials having all variables
in non-negative power.)  For $k=2$ this graph is isomorphic to Pascal triangle.

2) Restricted Young graph $\Y_k$. This is a subgraph of Pascal graph
formed by vectors $(c_1,\dots,c_k)$ with strictly
increasing non-negative coordinates, $0\leq c_1<c_2<\dots<c_k$.
The vertices of $\Y_k$ may be identified with Young diagrams having at most $k$
rows (to any vertex $(c_1,\dots,c_k)$ of $\Y_k$ we assign a Young diagram with
rows lengths $c_1\leq c_2-1\leq\dots\leq c_k-(k-1)$). Edges correspond
to addition of cells. The usual Young graph has all Young diagrams
as vertices, but when we count number of paths between two diagrams we may
always restrict ourselves to $\Y_k$ with large enough $k$.

3) Graph of strict partitions $\SY_k$. This is a subgraph of Pascal graph
formed by vectors $(c_1,\dots,c_k)$ with non-strictly increasing
non-negative coordinates $0\leq c_1\leq c_2\leq \dots \leq c_k$ satisfying
also the following condition: if $c_i=c_{j}$ for $i\ne j$, then $c_i=0$. 
To any vertex $(c_1,\dots,c_k)$ we may assign a Young diagram
with rows lengths $c_1,\dots,c_k$. So, this diagram has at most $k$
(non-empty) rows and they have distinct lengths. 

The following general straightforward fact connects generalized binomial coefficients
for subgraphs of $\mathbb{Z}^k$ and coefficients of polynomials or power series.

\begin{theorem}\label{coefspaths}
Let $G$ be a subgraph of $\mathbb{Z}^k$, monomials $u,v\in V$ be two vertices of $G$,
$\deg u\geq \deg v$.
Assume that $\varphi\in \Phi$ be a series satisfying the following conditions:

1) $[v]\varphi=1$;

2) if $v'\in V(G)$ and $\deg v=\deg v'$, then $[v']\varphi=0$;

3) if $w\notin V(G)$, but $x_iw\in V(G)$ for some variable $x_i$, then 
$[w](\varphi(x_1+\dots+x_k)^{\deg w-\deg u})=0$.

Then the number of paths from $v$ to $u$ equals 
$$
P_G(v,u)=[u]\left(\varphi(x_1+\dots+x_k)^{\deg u-\deg v}\right).
$$
\end{theorem}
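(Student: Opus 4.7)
The plan is to proceed by induction on $N := \deg u - \deg v$, establishing the conclusion simultaneously for every target vertex $u \in V(G)$ with $\deg u = \deg v + N$. For $M \geq 0$, set $\psi_M := \varphi \cdot (x_1+\dots+x_k)^M \in \Phi$; the target is $[u]\psi_N = P_G(v,u)$. The identity driving the induction is
\[
[u]\psi_M \;=\; \sum_{i=1}^{k} [u/x_i]\psi_{M-1},
\]
which follows from $\psi_M = (x_1+\dots+x_k)\psi_{M-1}$ by reading off the coefficient of $u$.

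The base case $N = 0$ is handled directly: for $u \in V(G)$ with $\deg u = \deg v$, condition 1 gives $[v]\varphi = 1 = P_G(v,v)$, and condition 2 gives $[u]\varphi = 0 = P_G(v,u)$ whenever $u \neq v$.

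For the inductive step, fix $u \in V(G)$ with $\deg u - \deg v = N \geq 1$ and split the recursive sum by whether the potential predecessor $u/x_i$ lies in $V(G)$:
\[
[u]\psi_N \;=\; \sum_{i :\, u/x_i \in V(G)} [u/x_i]\psi_{N-1} \;+\; \sum_{i :\, u/x_i \notin V(G)} [u/x_i]\psi_{N-1}.
\]
For indices of the first type, $u/x_i \in V(G)$ lies at degree $\deg v + N - 1$, so the inductive hypothesis yields $[u/x_i]\psi_{N-1} = P_G(v,\,u/x_i)$. For indices of the second type, $w := u/x_i$ is outside $V(G)$ but satisfies $x_i w = u \in V(G)$, so hypothesis 3 applied to $w$ (at the exponent appropriate to level $\deg w$) gives $[w]\psi_{N-1} = 0$. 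Combining,
\[
[u]\psi_N \;=\; \sum_{i :\, u/x_i \in V(G)} P_G(v,\,u/x_i) \;=\; P_G(v,u),
\]
the final equality being the standard decomposition of directed paths into $u$ by their last edge.

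Conceptually, the three hypotheses encode precisely what is required to convert the naive expansion of $\varphi(x_1+\dots+x_k)^N$, which weighs every ambient $\mathbb{Z}^k$-path from level $\deg v$ to $u$ by the corresponding coefficient of $\varphi$, into a pure count of $G$-paths from $v$: conditions 1 and 2 pin down the correct boundary data at level $\deg v$, while condition 3 cancels the contribution of every lattice path that wanders through a vertex outside $G$. The proof itself is thus a short bookkeeping induction with no hidden obstacle; the only nontrivial verifications are that the coefficient extractions are well defined in $\Phi$ (granted by the bounded in/out-degree assumption on $G$) and that condition 3 is read at the correct intermediate exponent. The substantive effort will lie not in the present theorem but in the later task of exhibiting, for each of the graphs $\P_k$, $\Y_k$, $\SY_k$, an explicit series $\varphi$ satisfying the three hypotheses.
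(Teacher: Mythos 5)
Your proof is correct and follows essentially the same route as the paper's: induction on the degree, the coefficient recursion $[u]\psi_M=\sum_{i}[u/x_i]\psi_{M-1}$, condition 3 to kill the contributions of the $u/x_i$ that are not vertices of $G$, and the last-edge decomposition of paths into $u$. You also read condition 3 at the exponent $\deg w-\deg v$ (the $\deg w-\deg u$ in the statement is evidently a typo), which is exactly how the paper itself applies it.
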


\begin{proof} Induction in $\deg u$. Base $\deg u=\deg v$ follows from 1) and 2). 
Denote $m=\deg u-\deg v$ and assume that the statement is proved for all vertices
of degree less then $m+\deg v$. Let $(u_i,u),i=1,\dots,s$, be all edges of the graph 
$G$ coming to $u$. Clearly $s\leq k$ and without loss of generality $u=x_iu_i$ for $i=1,\dots,s$.
Also note that for $s<i\leq k$ we have $[ux_i^{-1}](\varphi(x_1+\dots+x_k)^{m-1})=0$
by property 3).
Thus
\begin{multline*}
[u]\left(\varphi(x_1+\dots+x_k)^m\right)=\sum_{i=1}^k[ux_i^{-1}]\left(\varphi(x_1+\dots+x_k)^{m-1}\right)=\\
=\sum_{i=1}^s[u_i]\left(\varphi(x_1+\dots+x_k)^{m-1}\right)=\sum_{i=1}^s P_G(v,u_i)=P_G(v,u)
\end{multline*}
as desired.
\end{proof}

Theorem \ref{coefspaths} leads to a natural question: for which labeled graphs $(G,u)$ there exists such a function $\varphi$
that conditions 1), 2), 3) are satisfied? I do not know a full answer. The following statement is at least general enough to cover
all examples of this paper.

\begin{theorem}\label{general_existence} Assume that the $V\subset \mathbb{Z}^k$, $G$ is induced subgraph on $G$ with vertex set $V=V(G)$, and it
satisfies two following conditions:

1) (minimum-closed set) if $u,w\in V$, then also $\min(u,w)\in V$.  

2) (coordinate convexity) if $u,w\in V$, $w=ux_i^m$ for some index $i$ and positive integer $m$, then also $ux_i^s\in V$ for all
$0\leq s\leq m$. 

Then for any $v\in U$ there exists a function $\varphi$ satisfying conditions of Theorem \ref{coefspaths}. 
\end{theorem}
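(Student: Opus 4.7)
The plan is to reduce conditions 1--3 to a linear system in the degree-$d$ coefficients of $\varphi$ (where $d=\deg v$) and solve it by a pivoting construction driven by the two structural hypotheses on $V$.

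First I would note that only the degree-$d$ homogeneous part of $\varphi$ enters any of conditions 1--3, since in $[w](\varphi(x_1+\dots+x_k)^{\deg w-d})$ only monomials of $\varphi$ of degree exactly $d$ contribute. So I seek $\varphi=\sum_{u:\,\deg u=d}c_u u$, and the conditions read
\[
c_v=1,\ \ c_{v'}=0\ (v'\in V_d\setminus\{v\}),\quad \sum_{u\leq w,\,\deg u=d}c_u\binom{\deg w-d}{w-u}=0\ \ (w\in\partial V,\,\deg w\geq d),\tag{$\ast$}
\]
where $V_d:=V\cap\{\deg=d\}$ and $\partial V:=\{w\notin V:x_iw\in V\text{ for some }i\}$. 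I would process equations $(\ast)$ in order of increasing $\deg w$ (with some tiebreaker). For each $w\in\partial V$ pick an index $i(w)$ with $x_{i(w)}w\in V$ and declare the pivot $u^{\ast}_w:=w\cdot x_{i(w)}^{-(\deg w-d)}$, a degree-$d$ monomial coordinatewise below $w$. Coordinate-convexity forces $u^{\ast}_w\notin V$: otherwise $u^{\ast}_w$ and $x_{i(w)}w$ both in $V$ would put every intermediate $u^{\ast}_w\cdot x_{i(w)}^s$ into $V$, in particular $w$, contrary to $w\notin V$. So $c_{u^{\ast}_w}$ is a free coefficient.

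The crux is the \emph{separation property}: if $u^{\ast}_w\leq w'$ for $w'\in\partial V$ with $\deg w'\leq\deg w$, then $w'=w$. I will prove it as follows. The condition $u^{\ast}_w\leq w'$ already gives $w'_j\geq w_j$ for $j\neq i(w)$, so the only delicate case is $w'_{i(w)}<w_{i(w)}$. In that case consider $z:=\min(w+e_{i(w)},\,w'+e_{i(w')})$, which is in $V$ by min-closedness. A coordinatewise check gives $z_j=w_j$ for $j\neq i(w)$, and (both when $i(w')=i(w)$ and when $i(w')\neq i(w)$) $z_{i(w)}\leq w_{i(w)}$. Coordinate-convexity applied to $z$ and to $w+e_{i(w)}$---both in $V$, collinear along the $x_{i(w)}$-axis---then forces the intermediate point $w$ into $V$, a contradiction. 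So $w'_{i(w)}\geq w_{i(w)}$, giving $w'\geq w$ coordinatewise and hence $\deg w'\geq\deg w$ with equality only when $w'=w$; the hypothesis $\deg w'\leq\deg w$ forces $w'=w$.

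Granted the separation property, $c_{u^{\ast}_w}$ appears in no equation $(\ast)_{w'}$ processed before $w$, while its coefficient in $(\ast)_w$ itself equals $1$ (the multinomial coefficient with all weight on index $i(w)$). Thus $(\ast)_w$ uniquely determines $c_{u^{\ast}_w}$ in terms of already-fixed coefficients, and iterating through all boundary $w$'s produces the required $\varphi$. The main obstacle will be establishing the separation property, which is the only place where both structural hypotheses on $V$ enter together.
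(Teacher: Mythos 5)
Your proposal is correct and follows essentially the same route as the paper: the same reduction to a triangular linear system on the degree-$d$ coefficients, the same choice of pivot (your $u^{*}_w$ is the paper's ``son'' $S(w)$), and the same key separation claim proved by taking the minimum of the two lifted vertices $x_{i(w)}w$ and $x_{i(w')}w'$ and invoking coordinate convexity. The only cosmetic difference is that the paper folds the degree-$d$ value constraints into the same framework by calling those monomials ``special'' with themselves as sons, whereas you treat them as prescribed values and separately verify $u^{*}_w\notin V$.
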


\begin{proof}
Call a monomial $u\in \mathbb{Z}^k$ \textit{special} if either $u\in V$, $\deg u=\deg v$ (in particular, $v$ itself is special), or 
$\deg u\geq \deg v$,  $u\notin V$, $x_iu\in V$ for some variable $x_i$. It suffices to prove that there exists function
$\varphi$ homogeneous of degree $\deg v$ with any prescribed values of $[u](\sum x_i)^{\deg u-\deg v} \varphi$
for all special $u$.  This is a linear system on coefficients of $\varphi$. 

By replacing $V$ to $v^{-1}V$ we may suppose that $v=1$.
For any special monomial $u$ define its son $S(u)$: if $\deg u=0$, then $S(u)=0$, if $u\notin V$,
$x_iu\in V$, then $S(u)=x_i^{-\deg u} u$ (if different $u$ satisfy $x_iu\in V$, choose any). Key claim is as follows:

if $u\ne w$ are two special monomials and $w$ majorates $S(u)$, then 
$\deg w>\deg u$. 

Proof of the key claim. Assume that on the contrary $\deg w\leq \deg u$. At first, if $\deg u=0$, then $\deg w=0$ and
$w=S(u)=u$, a contradiction. Thus $d:=\deg u>0$, we may suppose that $x_1u\in V$,  $S(u)=x_1^{-d} u$. Denote
$w=x_1^{a_1-d}x_2^{a_2}\dots x_k^{a_k}u$, $a_i\geq 0$, $d\geq a_1+\dots+a_k$, and either $w\in V$ or $x_iw\in V$
for some $i$. Since  $w\ne u$, we have $d<a_1$. 
Denote $u_0:=\min(x_iw,x_1u)$, if $x_iw\in V$ and $u_0=\min(w,x_1u)$ if $w\in V$. Then 
$u_0=x_1^{a_1-d+\varepsilon}u$, where $\varepsilon\in \{0,1\}$ and $u_0\in V$ since $V$ is minimum-closed set. 
Coordinate convexity implies 
that if both $u_0,x_1u$ belong to $V$,
so does $u$. A contradiction. 

Now we start to solve our linear system for  coefficients of $\varphi$. For any special monomial $u$ we have a linear relation on 
the monomials of degree 0 majorated by $u$. Between them there is a monomial $S(u)$, and it does not appear in
relations corresponding to special monomials $w\ne 0$ with $\deg w\leq \deg u$. It allows to fix coefficients of $\varphi$
in appropriate order (by increasing the degree of $u$) and satisfy all our relations.
\end{proof}

\section{Observation on polynomials}

For verifying polynomial identities which allow
to calculate  coefficients  we use the following lemma in the spirit of Combinatorial 
Nullstellensatz \cite{Alon} of N.~Alon.

Let $A_i=\{a_{i0},a_{i1},\dots,a_{in}\}\subset K$,
where $K$ is a field, $i=1,2,\dots,k$. 

\begin{observation}\label{polyidentity} 
A polynomial $f(x_1,\dots,x_k)\in K[x_1,\dots,x_k]$
of degree at most $n$ is uniquely determined by its values on the combinatorial simplex
$$\Delta=\left\{A(t_1,\dots,t_k):=(a_{1t_1},a_{2t_2},\dots,a_{kt_k}),\sum t_i\leq n\right\}.$$
\end{observation}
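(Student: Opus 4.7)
The plan is to reduce the statement to showing that the evaluation map
$$\mathrm{ev}\colon\{f\in K[x_1,\dots,x_k]:\deg f\leq n\}\to K^{\Delta},\qquad f\mapsto (f(A(t_1,\dots,t_k)))_{A\in\Delta}$$
is an isomorphism. First I would check dimensions: the space of polynomials of degree at most $n$ in $k$ variables has dimension $\binom{n+k}{k}$, and $|\Delta|$ counts the tuples $(t_1,\dots,t_k)\in\mathbb{Z}_{\geq 0}^k$ with $\sum t_i\leq n$, which is also $\binom{n+k}{k}$. So it suffices to prove that $\mathrm{ev}$ is injective; equivalently, that any $f$ of degree at most $n$ vanishing on all of $\Delta$ is identically zero.

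Injectivity I would prove by induction on $k$. The base $k=1$ is immediate: a univariate polynomial of degree at most $n$ with $n+1$ distinct roots $a_{10},\dots,a_{1n}$ is zero. For the inductive step, expand $f$ in the Newton basis with respect to the last variable,
$$f(x_1,\dots,x_k)=\sum_{j=0}^{n}\varphi_j(x_1,\dots,x_{k-1})\prod_{i=0}^{j-1}(x_k-a_{ki}),$$
where $\deg\varphi_j\leq n-j$. Substituting $x_k=a_{kt}$ kills every term with $j>t$, yielding
$$f(x_1,\dots,x_{k-1},a_{kt})=\sum_{j=0}^{t}c_{tj}\,\varphi_j(x_1,\dots,x_{k-1}),\qquad c_{tt}\ne 0.$$
The left side vanishes on the $(k-1)$-variable simplex with parameter $n-t$ (the slice of $\Delta$ at $x_k=a_{kt}$). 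I would then perform a secondary induction on $t$: assuming $\varphi_0,\dots,\varphi_{t-1}$ are already identically zero, the identity reduces to $c_{tt}\varphi_t$ vanishing on a combinatorial simplex of size $\binom{n-t+k-1}{k-1}$ for the $k-1$ variables with parameter $n-t$, and since $\deg\varphi_t\leq n-t$, the outer induction hypothesis forces $\varphi_t\equiv 0$.

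The only delicate point is that the outer induction must be stated uniformly in $n$ (i.e., it must be phrased as ``for every $n\geq 0$ and every choice of sets $A_1,\dots,A_{k-1}$ of size $n+1$'') so that the inductive hypothesis applies with parameter $n-t$. Once this is set up, the dimension count plus the Newton expansion deliver the result in a couple of lines, and I do not anticipate any further obstacle.
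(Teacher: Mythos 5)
Your proof is correct, but it takes a different route from the paper on the key step. Both arguments begin with the same dimension count $|\Delta|=\binom{n+k}{k}=\dim\{f:\deg f\leq n\}$, reducing the problem to a one-sided verification. The paper then offers two options: an existence proof by induction on $n$ (adding, for each point with $\sum t_i=n$, the product $\prod_{i,\,s_i<t_i}(x_i-a_{is_i})$, which vanishes everywhere on $\Delta$ except at that point), and a uniqueness proof that invokes Alon's Combinatorial Nullstellensatz as a black box: if $f\not\equiv 0$ has leading monomial $x_1^{t_1}\cdots x_k^{t_k}$ with $\sum t_i\leq n$, then $\Delta$ contains the grid $B_1\times\cdots\times B_k$ with $|B_i|=t_i+1$, on which $f$ cannot vanish. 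You instead prove injectivity directly by induction on $k$, expanding $f$ in the Newton basis in $x_k$ and running a secondary induction over the slices $x_k=a_{kt}$; you correctly flag the one delicate point, namely that the outer hypothesis must be quantified over all parameters $n$ and all admissible node sets so that it applies to the slice with parameter $n-t$. The trade-off: your argument is self-contained and elementary (it in effect reproves the instance of the Nullstellensatz that the paper cites), at the cost of a double induction; the paper's uniqueness argument is a two-line deduction once CN is granted, which is natural here since the whole paper is framed around CN-type interpolation. Your degree bookkeeping ($\deg\varphi_j\leq n-j$, $c_{tt}=\prod_{i<t}(a_{kt}-a_{ki})\neq 0$) is sound, so there is no gap.
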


\begin{proof}
The number of points in $\Delta$ exactly equals to the dimension of the space
of polynomials with degree at most $n$ in
$k$ variables. Hence it suffices to check either existence or
uniqueness of the polynomial with degree at most $n$ and prescribed values on $\Delta$.

Both tasks are easy as we may see:

1) Existence. Induction in $n$. Base $n=0$ is clear. Assme that $n>1$
and for the simplex $\Delta'$, which corresponds to inequality $\sum t_i\leq n-1$,
there exists a polynomial $f(x_1,\dots,x_k)$ with degree at most $n-1$ and prescribed
values on $\Delta'$. For any point 
$A(t_1,\dots,t_k)$ with $\sum t_i=n$ we have a polynomial
$$\prod_{1\leq i\leq k,0\leq s_i<t_i} (x_i-a_{is_i}).$$
vanishing on all points of $\Delta$ but $A(t_1,\dots,t_k)$.
Appropriate linear combination of $f(x)$ and such polynomials
gives a polynomial with prescribed values on $\Delta$ (and degree at most $n$).

2) Uniqueness. It suffices to prove that polynomial $f(x_1,\dots,x_k)$
with degree at most $n$, which vanishes on $\Delta$, identically equals 0.
Assume the contrary. Let $x_1^{t_1}\dots x_k^{t_k}$ be a highest degree term in
$f$. The set $\Delta$ contains the product $B_1\times \dots \times B_k$,
where $B_i=\{a_{i0},\dots,a_{it_i}\}$, $|B_i|=t_i+1$. By Combinatorial Nullstellensatz
$f$ can not vanish on
$\prod B_i$. A contradiction.
\end{proof}

For $A_i=\{0,1,\dots,n\}$ we get  \emph{the standard simplex} 
$$\Delta_k^n=\{(t_1,\dots,t_k):t_i\geq 0, t_i\in \mathbb{Z}, \sum t_i\leq n\}.$$
It is a main partial case for us. In the theory of $q$-identities its $q$-analogue, which corresponds
to the sets $\{1,q,\dots,q^{n}\}$, plays analogous role.

We use notations $\ff{x}{n}=x(x-1)\dots(x-n+1)$, $\binom{x}{n}=\ff{x}{n}/n!$.

The following particular case of interpolation on $\Delta_k^n$ appears to be useful.

\begin{lemma}\label{interpolation}
If $f\in K[x_1,\dots,x_k]$, $\deg f\leq n$ and $f$ vanishes on $\Delta_k^{n-1}$, then
$$
f(x_1,\dots,x_k)=\sum_{c_1+\dots+c_k=n} f(c_1,\dots,c_k)\cdot \prod_{i=1}^k \binom{x_i}{c_i}=
\sum_{c_1+\dots+c_k=n} \frac{f(c_1,\dots,c_k)}{c_1!\dots c_k!}\cdot \prod_{i=1}^k \ff{x_i}{c_i}.
$$
\end{lemma}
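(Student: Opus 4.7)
The plan is to apply Observation \ref{polyidentity} directly with $A_i=\{0,1,\dots,n\}$, so that the combinatorial simplex in question is exactly $\Delta_k^n$. Both sides of the claimed identity are polynomials in $x_1,\dots,x_k$ of total degree at most $n$: for the right-hand side, each summand $\prod_i \binom{x_i}{c_i}$ has degree $c_1+\dots+c_k=n$. Hence by the observation it suffices to check that the two sides agree at every integer point $(t_1,\dots,t_k)$ with $t_i\geq 0$ and $\sum t_i\leq n$.

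I split these test points into two cases. If $\sum t_i\leq n-1$, then $(t_1,\dots,t_k)\in\Delta_k^{n-1}$, so the left-hand side vanishes by hypothesis. For the right-hand side, every multi-index $(c_1,\dots,c_k)$ in the sum satisfies $\sum c_i=n>\sum t_i$, so some coordinate has $c_i>t_i\geq 0$; the corresponding factor $\binom{t_i}{c_i}$ vanishes, and the whole summand is zero. So both sides are zero.

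If $\sum t_i = n$, then for any other multi-index $(c_1,\dots,c_k)$ with $\sum c_i=n$ there must exist $i$ with $c_i>t_i$ (otherwise $c_i\leq t_i$ for all $i$, forcing equality throughout), hence $\binom{t_i}{c_i}=0$ and that summand vanishes. Only the diagonal term $(c_1,\dots,c_k)=(t_1,\dots,t_k)$ survives, contributing $f(t_1,\dots,t_k)\cdot\prod_i\binom{t_i}{t_i}=f(t_1,\dots,t_k)$. Thus the right-hand side equals the left-hand side at this point as well.

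Since the two polynomials of degree at most $n$ coincide on the entire simplex $\Delta_k^n$, Observation \ref{polyidentity} forces them to be identically equal, which proves the lemma. There is no real obstacle here; the only thing to keep an eye on is ensuring the degree bound holds for each summand on the right (so that the observation applies) and verifying the vanishing in the ``off-diagonal'' case via the simple counting argument $\sum c_i > \sum t_i$ or $c_i\leq t_i$ componentwise.
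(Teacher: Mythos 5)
Your proof is correct and follows essentially the same route as the paper: both reduce to checking equality of values on $\Delta_k^n$ via Observation \ref{polyidentity}, noting that both sides vanish on $\Delta_k^{n-1}$ and that only the diagonal summand survives at points with $\sum t_i=n$. Your write-up just spells out the vanishing of the off-diagonal binomial factors more explicitly than the paper does.
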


\begin{proof} It suffices to check the equality of values on $\Delta_k^n$.
Both parts vanish on $\Delta_k^{n-1}$. If $(x_1,\dots,x_k)$ is a point on $\Delta_k^n\setminus \Delta_k^{n-1}$,
i.e. $\sum x_i=n$, then all summands on the right vanish except (possibly) the summand with $c_i=x_i$ for all $i$,
and its value just equals to $f(c_1,\dots,c_k)=f(x_1,\dots,x_k)$, i.e. the value of LHS in the same point, as desired.
 \end{proof}

\section{Chu--Vandermonde identity and multinomial coefficient}

This paragraph contains very well known results, but it shows how the scheme works.

We start our series of applications of Observation with the multinomial version
of  Chu--Vandermonde identity:

\begin{equation}\label{vandermonde}
\binom{x_1+\dots+x_k}{n}=\sum_{c_1+\dots+c_k=n} \prod_{i=1}^k \binom{x_i}{c_i}.
\end{equation}

This immediately follows from Lemma \ref{interpolation}.

The identity \eqref{vandermonde} has a following form with falling factorials:

\begin{equation}\label{vandermonde1}
\ff{(x_1+\dots+x_k)}{n}=\sum_{m_1+\dots+m_k=n} \frac{n!}{\prod m_i!} \prod_{i=1}^k \ff{x_i}{m_i}.
\end{equation}

Taking only leading terms in both sides we get Multinomial Theorem

\begin{equation}\label{multinomial}
(x_1+\dots+x_k)^n=\sum_{m_1+\dots+m_k=n} \frac{n!}{\prod m_i!} \prod_{i=1}^k {x_i}^{m_i}.
\end{equation}

Returning back to graded graphs, for multidimensional Pascal Graph $\P_k$ we get the formula
for the number of paths from origin 1 to any vertex $v=\prod x_i^{m_i}$, $m_i\geq 0$, $n:=\sum m_i=\deg v$:
$$
P_{\P_k} (1,v)=[v]\left(\sum x_i\right)^{n} = \frac{n!}{\prod m_i!}.
$$
This immediately follows from Theorem \ref{coefspaths} for $\varphi=1$ and identity \eqref{multinomial}.

\section{Hook length formula}

We start with polynomial identity which is in a sense similar to Chu--Vandermonde identity \eqref{vandermonde1}:

\begin{theorem}\label{identity_hooks}
\begin{align*}
\prod_{i<j} (x_j-x_i) \ff{\left(\sum_{i=1}^k x_i-k(k-1)/2\right)}{n} =\\
\sum_{n_1+\dots+n_k=n+k(k-1)/2} 
\frac{n!}{\prod n_i!} \prod_{i<j} (n_j-n_i)\cdot \prod \ff{x_i}{n_i}.
\end{align*}
\end{theorem}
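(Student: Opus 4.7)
\medskip
\noindent\textbf{Proof plan.} The plan is to apply Lemma~\ref{interpolation} with parameter $N:=n+k(k-1)/2$. Call $F$ the left-hand side; it is a polynomial in $x_1,\dots,x_k$ of total degree at most $N$, since the Vandermonde factor $\prod_{i<j}(x_j-x_i)$ has degree $k(k-1)/2$ and the falling factorial is a polynomial of degree $n$ in the single quantity $\sum x_i$. It therefore suffices to verify two things: that $F$ vanishes on $\Delta_k^{N-1}$, and that its values at the top face $\sum n_i=N$ match the coefficients appearing on the right.

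For the vanishing I would take an arbitrary nonnegative integer point $(c_1,\dots,c_k)$ with $\sum c_i\leq N-1$. If two of the $c_i$ coincide, the Vandermonde factor kills $F$. Otherwise the $c_i$ are $k$ pairwise distinct nonnegative integers, which forces
$$\sum c_i \geq 0+1+\dots+(k-1) = k(k-1)/2.$$
Combined with $\sum c_i\leq N-1$, this sandwich gives $\sum c_i-k(k-1)/2\in\{0,1,\dots,n-1\}$, on which the falling factorial $\ff{(\sum c_i-k(k-1)/2)}{n}$ is identically zero. So $F$ vanishes on $\Delta_k^{N-1}$.

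Evaluating $F$ at a point $(n_1,\dots,n_k)$ with $\sum n_i=N$ gives $\sum n_i-k(k-1)/2=n$, hence $\ff{n}{n}=n!$ and $F(n_1,\dots,n_k)=n!\prod_{i<j}(n_j-n_i)$. Substituting these values into the falling-factorial form of Lemma~\ref{interpolation} produces exactly the right-hand side of the theorem; summands with repeated $n_i$ contribute zero on both sides (the Vandermonde in the coefficient vanishes, and so does $F$ itself at such a point), so nothing further needs checking. The only nonroutine point in the argument is the lower bound $\sum c_i\geq k(k-1)/2$ for $k$ distinct nonnegative integer coordinates, which is precisely what makes the vanishing range line up with the roots of the falling factorial; once that observation is in hand the identity drops out of the interpolation lemma.
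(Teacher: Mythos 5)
Your proof is correct and follows essentially the same route as the paper: both apply Lemma~\ref{interpolation} with $N=n+k(k-1)/2$ and establish vanishing on $\Delta_k^{N-1}$ by the same dichotomy (repeated coordinates kill the Vandermonde factor; distinct nonnegative coordinates force $\sum c_i\geq k(k-1)/2$, placing $\sum c_i-k(k-1)/2$ among the roots $0,\dots,n-1$ of the falling factorial). Your explicit check of the top-face values is the step the paper leaves implicit, but it is the same argument.
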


\begin{proof}
By Lemma \ref{interpolation} it suffices to check that LHS vanishes on
$\Delta_k^{n+k(k-1)/2-1}$. 
Let $x_i$'s be non-negative integers and $\sum x_i<n+k(k-1)/2$.
If $\sum x_i<k(k-1)/2$ then some multiple $x_i-x_j$
vanishes, otherwise $y:=\sum x_i-k(k-1)/2$ is non-negative and $y<n$, hence $\ff{y}{n}=0$. 
\end{proof}

As in Vandermonde--Chu case, we may take only leading terms and get an identity
\begin{equation}\label{hooksleading}
\prod_{i<j} (x_j-x_i)\left(x_1+\dots+x_k\right)^n=\sum_{n_1+\dots+n_k=n+k(k-1)/2} 
\frac{n!}{\prod n_i!} \prod_{i<j} (n_j-n_i)\cdot \prod {x_i}^{n_i}.
\end{equation}

\begin{corollary}\label{hooks_graph} For any vertex $v=x_1^{n_1}\dots x_k^{n_k}$, $\sum n_i=n+k(k-1)/2$ of Young graph $\Y_k$ the number 
$P_{\Y_k}(v_0,v)$ to this vertex from $v_0=(0,1,\dots,k-1)$ equals
\begin{equation}\label{hooks}
P_{\Y_k}(v_0,v)=\frac{n!}{\prod n_i!} \prod_{i<j} (n_j-n_i)=[v] \prod_{i<j}(x_j-x_i) \left(\sum x_i\right)^n.
\end{equation}
\end{corollary}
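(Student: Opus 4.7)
The plan is to apply Theorem \ref{coefspaths} with the Vandermonde polynomial $\varphi = \prod_{1\leq i<j\leq k}(x_j - x_i)$ and source vertex $v_0 = x_1^0 x_2^1\cdots x_k^{k-1}$, and then read off the explicit value of the coefficient from identity \eqref{hooksleading}. Note that $\deg v_0 = k(k-1)/2$ so that $\deg v - \deg v_0 = n$, matching the power of $x_1+\dots+x_k$ in the statement.

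Conditions 1) and 2) of Theorem \ref{coefspaths} are essentially free. Expanding $\varphi = \sum_{\sigma\in S_k}\mathrm{sgn}(\sigma)\prod_i x_i^{\sigma(i)-1}$, the $\sigma=\mathrm{id}$ term is exactly $v_0$ with coefficient $+1$, giving $[v_0]\varphi = 1$. For condition 2), I observe that the only vertex of $\Y_k$ of degree $k(k-1)/2$ is $v_0$ itself, because the coordinates of any vertex of $\Y_k$ are distinct non-negative integers, whose sum is at least $0+1+\dots+(k-1)$, with equality only for $(0,1,\dots,k-1)$.

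The substantive step is condition 3), and I expect this to be the main (if minor) obstacle. Let $w = x_1^{a_1}\cdots x_k^{a_k} \notin V(\Y_k)$ with $x_iw \in V(\Y_k)$ for some $i$. If some $a_j$ is negative, the coefficient $[w]$ of any honest polynomial vanishes automatically, so I may assume $a_j \geq 0$ for all $j$. Since $(a_1,\dots,a_i+1,\dots,a_k)$ is strictly increasing while $(a_1,\dots,a_k)$ is not, a short case analysis forces $i\geq 2$ and $a_{i-1}=a_i$. Now the key point: the Vandermonde $\varphi$ is antisymmetric under the transposition of $x_{i-1}$ and $x_i$, and so is $\varphi\cdot(x_1+\dots+x_k)^{\deg w-\deg v_0}$. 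But the monomial $w$ is \emph{symmetric} under that transposition (since $a_{i-1}=a_i$), so its coefficient in an antisymmetric polynomial is zero. This is precisely condition 3).

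With all three conditions verified, Theorem \ref{coefspaths} yields
$$
P_{\Y_k}(v_0,v) = [v]\left(\prod_{i<j}(x_j-x_i)\cdot(x_1+\dots+x_k)^n\right).
$$
Finally, reading off the coefficient of $\prod_i x_i^{n_i}$ directly from identity \eqref{hooksleading} gives the closed form $\frac{n!}{\prod n_i!}\prod_{i<j}(n_j-n_i)$, completing the proof. Everything beyond the antisymmetry observation is routine assembly of previously established results.
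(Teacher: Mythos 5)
Your proof is correct and takes essentially the same route as the paper: apply Theorem \ref{coefspaths} with $\varphi=\prod_{i<j}(x_j-x_i)$ and then read the coefficient off identity \eqref{hooksleading}. The only cosmetic difference is that you verify condition 3) via the antisymmetry of $\varphi\cdot(\sum x_i)^m$ under the transposition swapping two equal exponents, whereas the paper reads the same vanishing directly off the closed-form coefficient $\frac{n!}{\prod n_i!}\prod_{i<j}(n_j-n_i)$ in \eqref{hooksleading}; both observations are immediate.
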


\begin{proof} Take $\varphi=\prod_{i<j}(x_j-x_i)$ and apply Theorem \ref{coefspaths}. 
Conditions 1) and 2) follow from \eqref{hooksleading} with $n=0$ (actually, this is just
the Vandermonde determinant formula). For checking 3) note that such $w=\prod x_i^{n_i}$
satisfies either $n_i<0$ for some $i$ or $n_i=n_j$ for some $i,j$. In both cases \eqref{hooksleading}
yields that corresponding coefficient vanishes. 
\end{proof}

Note that our proof of the formula \eqref{hooks} does not use Multinomial theorem, 
but is proved in the same way and the proof is almost equally short. In the rest part of this section
we explain the relation with hook lenths of Young diagram, this is just for the sake
of completeness.  

Recall that the graph $\Y_k$ may be viewed as the graph of Young diagrams having at most $k$
rows. For a vertex $v\in \Y_k$, $v=x_1^{n_1}\dots x_k^{n_k}$, corresponding diagram $\lambda(v)$
has $k$ (possible empty) rows with lengths $n_1\leq n_2-1\leq \dots\leq n_k-(k-1)$. Edges of
the graph $\Y_k$ correspond to adding cells, and paths correspond to \emph{skew standard Young tableaux}:
for any path with, say, $m$ edges put numbers $1,2,\dots,m$ in corresponding adding cells.
On this language expression \eqref{hooks} counts the number of standard Young tableaux
of the shape $\lambda(v)$. Assuming $n_1>0$ (i.e. the number of rows equals $k$) 
we may interpret parameters $n_1,\dots,n_k$
as \emph{hook lengths} of $k$ cells in the first column. Recall that (now specify that largest column in Young diagram 
is the leftmost and largest row the lowest) a hook of a cell $X$ in Young diagram is a union of $X$; all cells 
in the same column which are higher then $X$; all cells in the same
row which are on the right to $X$. 

\begin{claim}\label{hooks_products} In above notations
the product of hooks lengths of all cells in the Young diagram $\lambda(v)$ equals
$$
\prod h(\square)=\frac{\prod n_i!}{\prod_{i<j} (n_j-n_i)}.
$$ 
\end{claim}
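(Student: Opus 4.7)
The plan is to determine, for each row of the diagram $\lambda(v)$ separately, the set of hook lengths of the cells in that row, and then multiply across rows. Writing $\lambda_i = n_i - (i-1)$ for the length of row $i$ (with rows indexed $1, \ldots, k$ from the top, so that $\lambda_1 \leq \cdots \leq \lambda_k$), I will show that
$$\{\, h(\square) : \square \text{ in row } i\,\} = \{1, 2, \ldots, n_i\} \setminus \{n_i - n_{i'} : 1 \leq i' < i\}.$$
Granting this, the product of hook lengths in row $i$ equals $n_i!/\prod_{i' < i}(n_i - n_{i'})$, and multiplying over $i = 1, \ldots, k$ immediately gives $\prod n_i!/\prod_{i<j}(n_j - n_i)$, as desired.

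To prove the row description I would first derive an explicit formula for $h(i,j)$. The cell at row $i$, column $j$ has $\lambda_i - j$ cells to its right in row $i$ and $i - m_j$ cells strictly above it in column $j$, where $m_j := \min\{i'' : \lambda_{i''} \geq j\}$ is the index of the topmost row meeting column $j$. Thus $h(i,j) = (\lambda_i - j) + (i - m_j) + 1$, which simplifies via $\lambda_i + i - 1 = n_i$ to the compact form $h(i,j) = n_i + 2 - j - m_j$.

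Next I would stratify the column indices $j \in \{1, \ldots, \lambda_i\}$ by the value of $m_j$: one has $m_j = m$ exactly when $\lambda_{m-1} < j \leq \lambda_m$ (with convention $\lambda_0 := 0$), and on each such block $h(i,j)$ is strictly decreasing in $j$ and sweeps out consecutive integers. A direct calculation shows that as $m$ ranges from $1$ up to $i$, concatenating the resulting intervals of hook length values fills up $\{1, 2, \ldots, n_i\}$ except at the $i-1$ points $n_i - n_1, n_i - n_2, \ldots, n_i - n_{i-1}$, which are precisely the ``gaps'' between successive blocks — each row $i' < i$ contributes exactly one missing value.

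The main conceptual step is recognizing this one-to-one correspondence between rows above row $i$ and missing hook values, after which the rest is a straightforward calculation. A mild bookkeeping subtlety arises for rows where $\lambda_{m-1} = \lambda_m$ (equivalently $n_m = n_{m-1} + 1$), in which case the block for $m$ is empty; but since the $n_i$ are strictly increasing by definition of $\Y_k$, the list of gap values remains exactly $\{n_i - n_{i'} : i' < i\}$ and the argument is unaffected.
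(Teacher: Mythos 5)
Your proof is correct, and it reaches the same intermediate statement as the paper --- that the hook lengths in row $i$ are exactly $\{1,\dots,n_i\}\setminus\{n_i-n_{i'}: i'<i\}$ --- but by a genuinely different method. The paper gets this by a counting argument: the $n_i-(i-1)$ hook lengths in row $i$ are distinct integers in $\{1,\dots,n_i\}$, and the ``rectangle inequality'' $h(a)\ne h(b)+h(c)$ (where $a$ is the first-column cell of row $i$, $c$ the first-column cell of a higher row, $b$ any other cell of row $i$) forbids the $i-1$ values $n_i-n_{i'}$; since the number of cells equals the number of remaining candidates, the set is pinned down with no explicit computation of any individual hook length. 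You instead write down the closed form $h(i,j)=n_i+2-j-m_j$ and verify directly that the blocks of constant $m_j$ tile $\{1,\dots,n_i\}$ up to the gaps $n_i-n_{i'}$. Your route is more computational and requires the bookkeeping about empty blocks that you correctly flag (and your block endpoints do check out: block $m$ covers $[\,n_i-n_m+1,\;n_i-n_{m-1}-1\,]$, so consecutive blocks miss exactly the single value $n_i-n_m$, and an empty block still leaves both adjacent gap values missing); in exchange it is fully explicit and identifies \emph{which} cell carries each hook value, whereas the paper's pigeonhole argument is shorter but only identifies the multiset. Both correctly conclude by taking the product over rows.
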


\begin{proof} Assume that cells $a,b$ of a Young diagram lie in the 
same row and $a,c$ in the same column. Let $d$ be such a cell that $abdc$ is a rectangle.
If $d$ belongs to a diagram then $h(a)<h(b)+h(c)$, otherwise $h(a)>h(b)+h(c)$. Hence we always
have \emph{inequality}: $h(a)\ne h(b)+h(c)$. 
Now $n_1,\dots,n_k$ are hooks lengths of cells in the first column.
In $i$-th row there are $n_i-(i-1)$ cells, and their hooks lengths are distinct numbers
from 1 to $n_i$, with $i-1$ values excluded, and those
excluded values are $n_i-n_1$, $n_i-n_2$, $\dots$, $n_i-n_{i-1}$ --- by \emph{inequality}.
It suffices to multiply by $i=1,2,\dots,k$.  
\end{proof}

The above claim allows to formulate Corollary \ref{hooks_graph} in the form of \emph{hook length formula} \cite{FRT}:

\begin{theorem}\label{hooks_formula}[Hook length formula]
The number of standard Young tableaux of a given shape $\lambda$ with $n$ cells 
equals $n!/\prod_{\square} h(\square)$,
where product is taken over all cells of $\lambda$.
\end{theorem}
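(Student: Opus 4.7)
The plan is to assemble the hook length formula by combining Corollary \ref{hooks_graph} with Claim \ref{hooks_products}, since all the analytic/combinatorial work has already been done.

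First, given a Young diagram $\lambda$ with $n$ cells and exactly $r$ non-empty rows of lengths $\lambda_1 \geq \lambda_2 \geq \dots \geq \lambda_r > 0$, I would set $k = r$ and encode $\lambda$ as the vertex $v = x_1^{n_1} \cdots x_k^{n_k} \in \Y_k$, where $n_i = \lambda_{k-i+1} + (i-1)$. This gives $0 < n_1 < n_2 < \dots < n_k$ and $\sum n_i = n + k(k-1)/2$, which matches the normalization of Corollary \ref{hooks_graph}. The starting vertex is $v_0 = (0,1,\dots,k-1)$, corresponding to the empty diagram.

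Next, I would observe the standard bijection between directed paths $v_0 \to v$ in $\Y_k$ and standard Young tableaux of shape $\lambda$: each edge of the path adds a single cell to the current diagram (because traversing an edge multiplies the monomial by some $x_i$, which either extends a row or starts a new one subject to the strict inequality condition that defines $\Y_k$), and recording the step number in each cell as it is added produces a filling with $1,2,\dots,n$ that is increasing along rows and columns, i.e.\ a standard Young tableau; conversely every SYT of shape $\lambda$ arises this way. Thus the number of SYT of shape $\lambda$ equals $P_{\Y_k}(v_0,v)$.

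Finally, I would plug Claim \ref{hooks_products}, which gives $\prod_\square h(\square) = \prod n_i! / \prod_{i<j}(n_j - n_i)$, into the formula of Corollary \ref{hooks_graph},
\[
P_{\Y_k}(v_0,v) = \frac{n!}{\prod n_i!}\prod_{i<j}(n_j-n_i),
\]
to obtain $P_{\Y_k}(v_0,v) = n!/\prod_\square h(\square)$, which is the desired identity. There is no real obstacle here since the two prerequisites have already been proved; the only delicate point is the bookkeeping to make sure $k$ is chosen so that $n_1 > 0$ (required by Claim \ref{hooks_products}) and that the bijection between lattice paths in $\Y_k$ and standard tableaux is spelled out carefully.
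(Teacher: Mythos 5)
Your proposal is correct and is essentially the paper's own argument: the paper derives Theorem \ref{hooks_formula} precisely by combining Corollary \ref{hooks_graph} with Claim \ref{hooks_products}, using the identification of paths in $\Y_k$ with standard Young tableaux described just before the Claim. Your bookkeeping (choosing $k$ equal to the number of non-empty rows so that $n_1>0$, and setting $n_i=\lambda_{k-i+1}+(i-1)$) matches the paper's conventions.
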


\section{Skew Young tableaux }

Here we get generalizations of Theorem \ref{identity_hooks}, corresponding to counting
the paths between two arbitrary vertices of $\Y_k$ (i.e. the number of skew Young tableaux of a given shape).

The role of Vandermonde determinant $\prod_{i<j} (x_j-x_i)=\det (x_i^{j-1})_{i,j}$ is played by alternating determinants
$$
a_{m_1,\dots,m_k}(x_1,\dots,x_k)=\det\left(x_i^{m_j}\right);\,b_{m_1,\dots,m_k}(x_1,\dots,x_k)=\det\left(\ff{x_i}{m_j}\right).
$$
The following identity specializes to Theorem \ref{identity_hooks} for $m_i=i-1$, $i=1,2,\dots,k$.

\begin{theorem}\label{identity_hooks} If $m_1<\dots<m_k$ are distinct non-negative integers, $m=\sum m_i$, then
\begin{align*}
b_{m_1,\dots,m_k}(x_1,\dots,x_k) \ff{\left(\sum x_i-m\right)}{n} =\\
\sum_{n_1+\dots+n_k=n+m} 
\frac{n!}{\prod n_i!} b_{m_1,\dots,m_k}(n_1,\dots,n_k)\cdot \prod \ff{x_i}{n_i}.
\end{align*}
\end{theorem}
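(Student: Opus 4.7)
The plan is to follow exactly the same scheme that worked for Theorem \ref{identity_hooks} above (the specialization $m_i = i-1$), with the polynomial $\prod_{i<j}(x_j-x_i) = b_{0,1,\dots,k-1}$ replaced by the more general $b_{m_1,\dots,m_k}$. First I would observe that the LHS is a polynomial in $x_1,\dots,x_k$ of total degree at most $n+m$: the determinant $b_{m_1,\dots,m_k}(x_1,\dots,x_k)$ has degree $m=\sum m_j$ (its top-degree part is $a_{m_1,\dots,m_k}$), and $\ff{(\sum x_i-m)}{n}$ has degree $n$. So the identity is a statement about polynomials of degree $\leq n+m$, and by Lemma \ref{interpolation} it suffices to (a) check that LHS vanishes on $\Delta_k^{n+m-1}$, and (b) compute the values of LHS on the top slice $\sum x_i = n+m$.

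The main step is (a). Suppose $x_1,\dots,x_k$ are nonnegative integers with $\sum x_i < n+m$. If $\sum x_i \geq m$, then $y:=\sum x_i - m$ is a nonnegative integer strictly less than $n$, so $\ff{y}{n}=0$ and LHS vanishes. Otherwise $\sum x_i < m$, and I claim $b_{m_1,\dots,m_k}(x_1,\dots,x_k)=0$. This is the only point that requires new input beyond the earlier proof, but it is easy: in the Leibniz expansion
\[
b_{m_1,\dots,m_k}(x_1,\dots,x_k) = \sum_\sigma \mathrm{sgn}(\sigma)\prod_{i=1}^k \ff{x_i}{m_{\sigma(i)}},
\]
each factor $\ff{x_i}{m_{\sigma(i)}}$ vanishes whenever $x_i < m_{\sigma(i)}$, so a nonzero term forces $x_i\geq m_{\sigma(i)}$ for all $i$, and summing over $i$ gives $\sum x_i \geq \sum m_j = m$, contradicting the assumption.

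Finally for (b), at a point $(n_1,\dots,n_k)$ with $\sum n_i = n+m$ the factor $\ff{(\sum n_i - m)}{n} = \ff{n}{n} = n!$, so
\[
\text{LHS}(n_1,\dots,n_k) = n!\cdot b_{m_1,\dots,m_k}(n_1,\dots,n_k),
\]
and feeding these values into the formula of Lemma \ref{interpolation} reproduces exactly the RHS (after pulling the $n!$ out and combining with $1/\prod n_i!$). The only even mildly nontrivial point is the vanishing of $b_{m_1,\dots,m_k}$ on lattice points with small coordinate sum, and the Leibniz-expansion argument disposes of it in one line; everything else is a direct application of Lemma \ref{interpolation}.
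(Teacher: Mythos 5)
Your proof is correct and takes essentially the same route as the paper: both reduce the identity to Lemma \ref{interpolation} by showing the left-hand side vanishes on $\Delta_k^{n+m-1}$ and then reading off the values on the top slice $\sum x_i=n+m$. The only (minor) difference is in the vanishing step, where you split on whether $\sum x_i\geq m$ and kill the determinant via the Leibniz expansion when $\sum x_i<m$, whereas the paper splits on whether the increasingly sorted coordinates satisfy $y_i\geq m_i$ for all $i$ and, if not, exhibits a zero minor of the matrix $\left(\ff{y_i}{m_j}\right)$; both arguments are valid and yours is, if anything, slightly more self-contained.
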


\begin{proof} Due to Lemma \ref{interpolation}
 it suffices to check that LHS vanishes on $\Delta_k^{n+m-1}$. 
Fix a point $(x_1,\dots,x_k)\in \Delta_k^{n-1}$. Let $y_1\leq \dots \leq y_k$ be 
the increasing permutation of $x_i'$s. If $y_i<m_i$ for some $i$, then a matrix
$(\ff{y_i}{m_j})$ is singular as it has a $i\times (n-i+1)$ minor of zeros, matrix 
$(\ff{x_i}{m_j})$ is therefore singular too. If $y_i\geq m_i$ for all $i$, then denoting
$y:=\sum x_i-m=\sum (y_i-m_i)\geq 0$ we have $0\leq y<n$, hence $\ff{y}{n}=0$.
\end{proof}

Taking leading terms we get the following identity for homogeneous polynomials:
\begin{equation}\label{skew_homogeneous}
a_{m_1,\dots,m_k}(x_1,\dots,x_k) \left(\sum x_i\right)^{n} =\\
\sum_{n_1+\dots+n_k=n+m} 
\frac{n!}{\prod n_i!} b_{m_1,\dots,m_k}(n_1,\dots,n_k)\cdot \prod {x_i}^{n_i}.
\end{equation}

\begin{corollary}\label{hooks_graph} Let $v_1=x_1^{m_1}\dots x_k^{m_k}$, 
$v_2=x_1^{n_1}\dots x_k^{n_k}$ be two vertices of the Young graph $\Y_k$ such that 
$n_i\geq v_i$ for all $i=1,2,\dots,k$. Denote $\sum m_i=m$, $\sum n_i=n+m$. Then the number 
$P_{\Y_k}(v_1,v_2)$ of paths from $v_1$ to $v_2$ equals
\begin{equation}\label{skew}
P_{\Y_k}(v_1,v_2)=\frac{n!}{\prod n_i!} b_{m_1,\dots,m_k}(n_1,\dots,n_k)=[v_2]  a_{m_1,\dots,m_k}(x_1,\dots,x_k) \left(\sum x_i\right)^n.
\end{equation}
\end{corollary}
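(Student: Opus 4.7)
The plan is to apply Theorem \ref{coefspaths} with source $v=v_1$, target $u=v_2$, and the homogeneous polynomial $\varphi=a_{m_1,\dots,m_k}(x_1,\dots,x_k)$ of degree $m=\deg v_1$. Once conditions 1)--3) of that theorem are checked, identity \eqref{skew_homogeneous} immediately evaluates $[v_2]\bigl(\varphi\cdot(x_1+\dots+x_k)^n\bigr)$ as $\frac{n!}{\prod n_i!}\,b_{m_1,\dots,m_k}(n_1,\dots,n_k)$, which is exactly the claim.

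For conditions 1) and 2) I would use the Leibniz expansion $\varphi=\sum_{\sigma}\operatorname{sgn}(\sigma)\prod_i x_i^{m_{\sigma(i)}}$. Only monomials whose exponent vectors are permutations of $(m_1,\dots,m_k)$ appear in $\varphi$; the identity permutation contributes $v_1=\prod x_i^{m_i}$ with coefficient $+1$, giving $[v_1]\varphi=1$. Vertices of $\Y_k$ have strictly increasing exponent vectors, and among the permutations of $(m_1,\dots,m_k)$ the only strictly increasing one is $(m_1,\dots,m_k)$ itself; hence $[v']\varphi=0$ for every other vertex $v'\in V(\Y_k)$ of degree $m$.

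The main work is condition 3). I would observe that $w=\prod x_j^{n_j}\notin V(\Y_k)$ together with $x_iw\in V(\Y_k)$ for some $i$ can occur only in two ways: either (a) $i=1$ and $n_1=-1$, or (b) $n_{i-1}=n_i$ for some $i\ge 2$; any other violation of the defining inequalities of $\Y_k$ cannot be repaired by incrementing a single coordinate. In case (a) the polynomial $\varphi\cdot(x_1+\dots+x_k)^N$ has no monomial with a negative exponent, so the coefficient vanishes trivially. In case (b), identity \eqref{skew_homogeneous} expresses the coefficient as $\frac{N!}{\prod n_j!}\,b_{m_1,\dots,m_k}(n_1,\dots,n_k)$, and the determinant $\det(\ff{n_j}{m_l})_{j,l}$ vanishes because rows $i-1$ and $i$ of the matrix $(\ff{n_j}{m_l})$ are identical.

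With all three conditions verified, Theorem \ref{coefspaths} yields $P_{\Y_k}(v_1,v_2)=[v_2]\bigl(\varphi\cdot(x_1+\dots+x_k)^n\bigr)$, and a final application of \eqref{skew_homogeneous} completes the proof. I expect the case analysis for condition 3) to be the only delicate step; everything else is formal bookkeeping built on identity \eqref{skew_homogeneous} and standard determinant properties.
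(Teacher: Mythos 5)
Your proposal is correct and follows essentially the same route as the paper: take $\varphi=a_{m_1,\dots,m_k}$, verify conditions 1)--3) of Theorem \ref{coefspaths} (the paper itself offers the Leibniz expansion of the determinant as one way to get 1) and 2), and handles condition 3) by the same dichotomy of a negative exponent versus two equal exponents killing the determinant $b_{m_1,\dots,m_k}(n_1,\dots,n_k)$), and then read off the coefficient from \eqref{skew_homogeneous}. Your treatment is in fact slightly more detailed than the paper's, e.g.\ in pinning down that the negative-coordinate case forces $n_1=-1$.
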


\begin{proof} Take $\varphi=a_{m_1,\dots,m_k}(x_1,\dots,x_k)$ and apply Theorem \ref{coefspaths}. 
Conditions 1) and 2) follow from \eqref{skew_homogeneous} with $n=0$ (or from expanding the determinant).
For checking 3) note that such $w=\prod x_i^{n_i}$
satisfies either $n_i<0$ for some $i$ or $n_i=n_j$ for some $i,j$. In both cases \eqref{skew_homogeneous}
yields that corresponding coefficient vanishes. 
\end{proof}

The value $b_{m_1,\dots,m_k}(n_1,\dots,n_k)$ has a combinatorial interpretation
following from the  Lindstr\"om -- Gessel -- Viennot lemma: up to a multiple $\prod m_i!$
it is a number of semistandard Young tableaux
of a given shape and content. See details in \cite{F}.

A.~M.~Vershik pointed out that similar results are known for the graph of strict diagrams. It
also may be included in our framework. 

\section{Strict diagrams}

For counting the number of paths in teh graph $\SY_k$ we need series which are not polynomials. 

Let $x_1,\dots,x_k$ be variables (as before). Consider the set $\M$ of rational functions in
hose variables with denominator $\prod_{i<j} (x_i+x_j)$. 
Expand such functions in Laurent series in $x_1,x_2/x_1,x_3/x_2,\dots,x_k/x_{k-1}$ (i.e. $(x_i+x_j)^{-1}=x_i^{-1}-x_jx_i^{-2}+x_j^2x_i^{-3}-\dots$
for $i<j$). Define the value of such a function in a point
$(c_1,\dots,c_k)$ with non-negative coordinates: if coordinates are positive,
just substitute them in a function, if some coordinates vanish, replace them by positive numbers
$t,t^2,\dots$ (in such an order), and let $t$ tend to $+0$. What we actually need is 
that for $i<j$ the value $x_j/(x_i+x_j)$ with $x_i=x_j=0$ equals 0. 
Each function  $f\in \M$ may be expanded as
$f=P[f]+Q[f]$, where $P[f]$ is a polynomial in $x_1,\dots,x_k$, 
and in $Q[f]$ each term $\prod x_i^{c_i}$ contains at least one variable $x_i$ in a negative power
$c_i<0$. We say that $P[f]$ is a \emph{polynomial component} of $f$ and $Q[f]$ is an \emph{antipolynomial component} 
of a function $f(x_1,\dots,x_k)$. 

\begin{lemma}\label{antipoly} Define a function $f_n(x_1,\dots,x_k)\in \M$ by the formula
$$
f_n(x_1,\dots,x_k)=\prod_{1\leq i<j\leq k} \frac{x_i-x_j}{x_i+x_j} \cdot \ff{(x_1+\dots+x_k)}{n}.
$$
Then 

1) its antipolynomial component $Q[f_n]$ vanishes on a standard simplex $\Delta_k^n$.

2) if $c_1,\dots,c_k$ are integers such that $c_j<0$, $c_i\geq 0$ for $i=j+1,\dots,k$, then
$[\prod x_i^{c_i}]f_n=0$.
\end{lemma}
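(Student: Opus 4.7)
The plan is to mirror the Young graph argument (Theorem \ref{identity_hooks} and Corollary \ref{hooks_graph}), pinning down the polynomial component $P[f_n]$ via Lemma \ref{interpolation}. For Part 1, I would first check that $f_n$ itself vanishes on $\Delta_k^{n-1}$: the factor $\ff{(x_1+\cdots+x_k)}{n}$ kills the whole expression at every non-negative integer point with $\sum c_i<n$, and the remaining factor $\prod_{i<j}(x_i-x_j)/(x_i+x_j)$ stays bounded under the $t\to 0^+$ substitution $x_i=t^i$ prescribed before the lemma (each ratio tending to $\pm 1$ because $x_j=t^j$ is dominated by $x_i=t^i$ for $i<j$). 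Since $f_n$ has total degree at most $n$, so does $P[f_n]$, which by Lemma \ref{interpolation} is determined by its values on $\Delta_k^n$.

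The substance of Part 1 is then the identity $Q[f_n](c)=0$ on $\Delta_k^n$, equivalently $P[f_n](c)=f_n(c)$ there. The natural plan is to argue that the formal Laurent expansion of $f_n$ agrees with the rational function in the convergence domain $x_1\gg x_2\gg\cdots\gg x_k$, so that at lattice points (or via the $t\to 0^+$ limit at boundary points) the decomposition $P[f_n](c)+Q[f_n](c)=f_n(c)$ holds. Combined with the known values $f_n(c)=0$ on $\Delta_k^{n-1}$ and $f_n(c)=n!\prod_{i<j}(c_i-c_j)/(c_i+c_j)$ on the top face $\sum c_i=n$, together with the uniqueness statement of Lemma \ref{interpolation}, this forces $P[f_n]$ to interpolate $f_n$ on $\Delta_k^n$, so $Q[f_n]$ vanishes there.

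For Part 2 I would work combinatorially. Each factor expands as $(x_i-x_j)/(x_i+x_j)=1-2(x_j/x_i)+2(x_j/x_i)^2-\cdots$, so a monomial $\prod x_l^{d_l}$ of the product arises from a choice of non-negative integers $(m_{ij})_{i<j}$ with $d_l=\sum_{i<l}m_{il}-\sum_{l<j}m_{lj}$ and signed weight $\prod_{m_{ij}>0}2(-1)^{m_{ij}}$. Convolving with $\ff{(x_1+\cdots+x_k)}{n}$ (whose monomials $\prod x_i^{e_i}$ have $e_i\ge 0$ and $\sum e_i\le n$), the coefficient of $\prod x_i^{c_i}$ in $f_n$ becomes a signed sum over admissible pairs $((m_{ij}),(e_i))$ with $d_i+e_i=c_i$. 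Under the hypothesis $c_j<0$ and $c_i\ge 0$ for $i>j$, at least one $m_{jl}$ with $l>j$ must be positive (otherwise $d_j=\sum_{i<j}m_{ij}\ge 0$ combined with $e_j\ge 0$ would force $c_j\ge 0$), and the plan is to pair contributing tuples via a sign-reversing involution exploiting the uniform sign pattern $2(-1)^{m_{ij}}$ — presumably by a local modification of one $m_{jl}$ coupled with a compensating change in the $(e_i)$'s. Pinning down this involution precisely, together with the justification in Part 1 that the formal Laurent expansion recovers the rational function at boundary lattice points via the $t\to 0^+$ limit, will be the main obstacles.
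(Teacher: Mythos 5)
Your Part 1 is circular. You correctly reduce the claim to ``$P[f_n](c)=f_n(c)$ for $c\in\Delta_k^n$'', but then assert that the decomposition $P[f_n](c)+Q[f_n](c)=f_n(c)$ together with the uniqueness in Lemma \ref{interpolation} ``forces'' $P[f_n]$ to interpolate $f_n$ on $\Delta_k^n$. It does not: interpolation determines a polynomial \emph{from} its values on $\Delta_k^n$, but the values of $P[f_n]$ there are exactly what is unknown --- knowing them equals knowing that $Q[f_n]$ vanishes there, which is the statement being proved. The missing idea (and the actual content of the paper's proof) is a device that makes $P[\cdot]$ computable pointwise: write $\prod_{i<j}\frac{x_i-x_j}{x_i+x_j}$ as a Pfaffian, i.e.\ an alternating sum over perfect matchings of products $\prod_s\frac{\xi_s-\zeta_s}{\xi_s+\zeta_s}$, and expand $\ff{(x_1+\dots+x_k)}{n}$ by Chu--Vandermonde along the same matching into $\prod_s\ff{(\xi_s+\zeta_s)}{\alpha_s}$. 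In each resulting term the variables separate into disjoint pairs, so $P[\text{product}]=\prod P[\text{factor}]$, and each pair factor is either already a polynomial (when $\alpha_s\geq 1$) or handled by $P[(x_a-x_b)/(x_a+x_b)]=1$ together with the observation that on $\Delta_k^n$ the constraints $\xi_s+\zeta_s\geq\alpha_s$, $\sum\alpha_s=n$ force $\xi_s=\zeta_s=0$ whenever $\alpha_s=0$. Without some such separation of variables you have no handle on $P[f_n]$ at a point, and your outline never supplies one.

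For Part 2 your setup (expanding every factor as $1-2(x_j/x_i)+2(x_j/x_i)^2-\dots$) is fine, but the sign-reversing involution you are looking for does not exist, because the statement with the hypothesis read literally is false: for $k=2$, $n=0$, $(c_1,c_2)=(-1,1)$ satisfies $c_1<0$, $c_2\geq 0$, yet $[x_1^{-1}x_2]\tfrac{x_1-x_2}{x_1+x_2}=-2$. The hypothesis must be $c_i\leq 0$ for $i>j$ (in the only case used later, $c_i=0$ for $i>j$ --- note the graph $\SY_k$ is in fact used with \emph{decreasing} coordinates, as in the last section). Under that reading no cancellation between terms is needed: in your notation, $\sum_{l\geq j}d_l=\sum_{i<j\leq l}m_{il}\geq 0$ and $e_l\geq 0$, so every admissible pair $((m_{ij}),(e_i))$ has $\sum_{l\geq j}(d_l+e_l)\geq 0$, whereas the target monomial has $\sum_{l\geq j}c_l<0$; hence each term already contributes $0$. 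This is also the substance of the paper's (tersely stated) argument: a negative power of $x_j$ can only come from a pair factor whose partner $x_{b}$ with $b>j$ then carries a strictly positive power.
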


\begin{proof} We may suppose that $k=2d$ is even (else replace $k$ to $k+1$ and put $x_{k+1}=0$). 
Consider the following antisymmetric $k\times k$ matrix: $a_{ij}=(x_i-x_j)/(x_i+x_j)$. 
Note that its Pfaffian lies in $\M$, is is homogeneous of order 0, it vanishes for $x_i=x_j$ 
and is singular for $x_i=-x_j$. Thus up to a constant multiple (it is
not hard to verify that actually up to a sign) it equals $\prod_{1\leq i<j\leq k} \frac{x_i-x_j}{x_i+x_j}$.
Pfaffian is an alternating sum of expressions like
$$\prod_{i=1}^d \frac{\xi_i-\zeta_i}{\xi_i+\zeta_i},$$ 
where $\{\xi_1,\zeta_1,\dots,\xi_d,\zeta_d\}=\{x_1,\dots,x_k\}$,
On the other hand, Vandermonde identity \eqref{vandermonde} allows
to express a falling factorial $\ff{(x_1+\dots+x_k)}{n}$ as a linear expressions like
$$\prod_{i=1}^d \ff{(\xi_i+\zeta_i)}{\alpha_i},\,\alpha_i\geq 0, \alpha_1+\dots+\alpha_d=n.$$
Fixing partition into pairs and after that exponents $\alpha_1,\dots,\alpha_d$ we reduce both claims to the expression
$$
F(x_1,\dots,x_k)=\prod_{i=1}^d \frac{\xi_i+\zeta_i}{\xi_i+\zeta_i}\cdot \ff{(\xi_i+\zeta_i)}{\alpha_i}.
$$
Note that variables are separated here, thus the polynomial component of this product
is just the product
of polynomial components of the multiples. We have
$$
Q[F]=F-P[F]=\prod_{i=1}^d \frac{\xi_i+\zeta_i}{\xi_i+\zeta_i}\cdot \ff{(\xi_i+\zeta_i)}{\alpha_i}-\prod_{i=1}^d P\left[\frac{\xi_i+\zeta_i}{\xi_i+\zeta_i}\cdot 
\ff{(\xi_i+\zeta_i)}{\alpha_i}\right].
$$
If $\alpha_i\geq 1$, the corresponding multiple is a polynomial. If  $\alpha_i=0$, we have $\xi_i=x_a,\zeta_i=x_b$ for some indexes $a<b$,
we use
a relation $P[(x_a-x_b)/(x_a+x_b)]=1$. Substituting $(x_1,\dots,x_k)\in \Delta_k^n$ we see that if
$\xi_i+\zeta_i<\alpha_i$ for some index $i$, both minuend and the subtrahend take zero value, otherwise
$\xi_i=\zeta_i=0$ for all $i$ with $\alpha_i=0$, thus corresponding multiples in 
minuend and the subtrahend are equal.

For proving 2) note that we should have $x_j=\xi_s$ for some $s$, but if $\xi_s$ is taken in negative power, then $\zeta_s=x_{b_s}$
must be taken in positive power and $b_s>j$.  
\end{proof}

Modulo this lemma everything is less or more the same as in previous sections.

\begin{theorem}\label{polycomponent} Polynomial component of the function $f_n(x_1,\dots,x_k)$ equals
$$
P\left[\prod_{1\leq i<j\leq k} \frac{x_i-x_j}{x_i+x_j} \cdot \ff{(x_1+\dots+x_k)}{n}\right]=\sum_{m_1+\dots m_k=n} \prod_{i<j} \frac{m_i-m_j}{m_i+m_j}\cdot \frac{n!}{\prod m_i!}\cdot \prod \ff{x_i}{m_i}
$$
(recall that $\frac{m_i-m_j}{m_i+m_j}$ for $m_i=m_j=0$ is equal to 1). 
\end{theorem}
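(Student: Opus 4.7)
The plan is to apply Lemma \ref{interpolation} to the polynomial $P[f_n]$ and compute its values on the boundary of the standard simplex using Lemma \ref{antipoly}.

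First I would note that $f_n$, as a rational function, is homogeneous of total degree $n$ (the Pfaffian/Vandermonde-like factor is homogeneous of degree $0$ and the falling factorial has degree $n$). When expanded as a Laurent series, every monomial appearing in $f_n$ has total degree exactly $n$, so $P[f_n]$ is a homogeneous polynomial of degree at most $n$. Hence Lemma \ref{interpolation} is applicable provided we check $P[f_n]$ vanishes on $\Delta_k^{n-1}$.

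Next I would verify this vanishing. For a point $(c_1,\dots,c_k)\in \Delta_k^{n-1}$ the sum $\sum c_i$ is a non-negative integer strictly less than $n$, so $\ff{(\sum c_i)}{n}=0$. Evaluating the Pfaffian factor by the convention of substituting $x_i\mapsto t^{\sigma(i)}$ for zero coordinates keeps it finite (every vanishing denominator $x_i+x_j$ is cancelled by a numerator with equal leading order), and so $f_n(c_1,\dots,c_k)=0$. Combined with part 1) of Lemma \ref{antipoly}, which gives $Q[f_n](c_1,\dots,c_k)=0$ on all of $\Delta_k^n\supset \Delta_k^{n-1}$, we conclude $P[f_n]=f_n-Q[f_n]=0$ on $\Delta_k^{n-1}$.

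Then I would evaluate $P[f_n]$ at the remaining points of $\Delta_k^n$, namely those $(c_1,\dots,c_k)$ with $\sum c_i=n$. Again $Q[f_n](c)=0$ by Lemma \ref{antipoly}, so
\[
P[f_n](c_1,\dots,c_k)=f_n(c_1,\dots,c_k)=\prod_{i<j}\frac{c_i-c_j}{c_i+c_j}\cdot \ff{n}{n}=n!\prod_{i<j}\frac{c_i-c_j}{c_i+c_j},
\]
with the value $1$ assigned to any factor $\tfrac{0-0}{0+0}$ (this is precisely what the $t\to 0^+$ convention yields, since the pair with the larger index is substituted by a higher power of $t$). Finally, substituting these values into the formula from Lemma \ref{interpolation} produces exactly the sum claimed in the statement.

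The only genuinely delicate step is the second one: making sure that $f_n$ really does evaluate to $0$ at points of $\Delta_k^{n-1}$ where several coordinates vanish, which is where the limiting convention for the $x_j/(x_i+x_j)$ terms matters. Everything else is a direct combination of the interpolation lemma with the vanishing statement already proved in Lemma \ref{antipoly}.
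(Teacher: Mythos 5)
Your proposal is correct and follows essentially the same route as the paper: reduce to values on $\Delta_k^n$, use part 1) of Lemma \ref{antipoly} to replace $P[f_n]$ by $f_n$ there, observe that the falling-factorial factor kills everything on $\Delta_k^{n-1}$, and read off the coefficients from Lemma \ref{interpolation}. The only difference is that you spell out the $t\to 0^{+}$ convention at points with repeated zero coordinates, which the paper leaves implicit.
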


\begin{proof}
Both parts are polynomials of degree at most $n$, thus it suffices to check that their values in each point
$(c_1,\dots,c_k)\in \Delta_k^n$ are equal. 
The polynomial component of  $f_n(x_1,\dots,x_k)$ takes the same values on $\Delta_k^n$ as the function $f_n$ itself. 
Thus by Lemma \ref{interpolation} it suffices to check that $f_n$ vanishes on
$\Delta_k^{n-1}$. But already the multiple $\ff{(x_1+\dots+x_k)}{n}$
vanishes on $\Delta_k^{n-1}$.
\end{proof}

\begin{corollary}\label{cor1}
Coefficient $H(m_1,\dots,m_k)$ of the Laurent series $\prod_{1\leq i<j\leq k} \frac{x_i-x_j}{x_i+x_j} \cdot (x_1+\dots+x_k)^n$ in monomial 
$\prod x_i^{m_i}$, $m_i\geq 0$,
equals $\prod_{i<j} \frac{m_i-m_j}{m_i+m_j}\cdot \frac{n!}{\prod m_i!}$.
\end{corollary}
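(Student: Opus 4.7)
The plan is to deduce this coefficient identity from Theorem~\ref{polycomponent} by recognizing the Laurent series in question as the top-degree homogeneous piece of $f_n$.

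First I would observe that each factor $\tfrac{x_i-x_j}{x_i+x_j}$ is homogeneous of degree $0$ (a ratio of homogeneous polynomials of degree $1$), so
$$g_n(x_1,\dots,x_k):=\prod_{i<j}\frac{x_i-x_j}{x_i+x_j}\cdot(x_1+\dots+x_k)^n$$
is a homogeneous Laurent series of degree $n$: every monomial appearing in its expansion has exponents summing to $n$. In particular the coefficient at $\prod x_i^{m_i}$ is automatically zero unless $\sum m_i=n$, so from now on I assume $\sum m_i=n$.

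Next I would compare $g_n$ with $f_n$. Since $\ff{y}{n}-y^n$ is a polynomial of degree at most $n-1$, the difference
$$f_n-g_n=\prod_{i<j}\frac{x_i-x_j}{x_i+x_j}\cdot\bigl(\ff{(x_1+\dots+x_k)}{n}-(x_1+\dots+x_k)^n\bigr)$$
is a product of a degree-$0$ Laurent series with a polynomial of degree $<n$, hence all its homogeneous components have degree strictly less than $n$. Therefore the coefficient at any monomial of total degree exactly $n$ is the same in $f_n$ and in $g_n$.

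Finally I would read off the coefficient from Theorem~\ref{polycomponent}. That theorem gives
$$P[f_n]=\sum_{m_1'+\dots+m_k'=n}c_{(m')}\prod_{i}\ff{x_i}{m_i'},\qquad c_{(m')}=\prod_{i<j}\frac{m_i'-m_j'}{m_i'+m_j'}\cdot\frac{n!}{\prod m_i'!}.$$
For fixed $(m_1,\dots,m_k)$ with $m_i\ge 0$ and $\sum m_i=n$, the only summand on the right that contributes to the monomial $\prod x_i^{m_i}$ is the one with $m_i'=m_i$ for all $i$: any other tuple with $\sum m_i'=n$ would need $m_i\le m_i'$ for every $i$ (since $\ff{x_i}{m_i'}$ involves only powers $x_i^j$ with $j\le m_i'$), which together with the equal sums forces $(m')=(m)$. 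Meanwhile $Q[f_n]$ by construction contains only monomials in which some exponent is negative, so it contributes nothing to the coefficient at $\prod x_i^{m_i}$ with $m_i\ge 0$. Combining, the coefficient of $\prod x_i^{m_i}$ in $f_n$, and hence in $g_n$, equals $c_{(m)}$, which is exactly the claimed expression.

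There is no real obstacle here; the only thing to double-check is the homogeneity argument that justifies transferring the coefficient from $f_n$ to $g_n$, and the fact that no cross-contribution from $Q[f_n]$ or from non-diagonal summands of $P[f_n]$ reaches a pure-polynomial monomial of total degree $n$. Both are routine.
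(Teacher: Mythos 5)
Your proposal is correct and follows essentially the route the paper intends: the corollary is stated without proof as an immediate consequence of Theorem~\ref{polycomponent}, obtained by extracting the top-degree homogeneous part (just as \eqref{hooksleading} and \eqref{skew_homogeneous} are obtained from their falling-factorial versions), which is exactly what you do. Your explicit checks — that $f_n-g_n$ has no degree-$n$ monomials, that $Q[f_n]$ cannot contribute to a monomial with all exponents nonnegative, and that only the diagonal summand of $P[f_n]$ reaches $\prod x_i^{m_i}$ — are the right details and are all sound.
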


\begin{corollary} If $u=\prod x_i^{m_i}$, is a vertex of $\SY_k$, , $n:=\sum m_i$, then the number of paths from the origin to $v$ equals
$$
P_{\SY_k}(1,u)=\prod_{i<j} \frac{m_i-m_j}{m_i+m_j}\cdot \frac{n!}{\prod m_i!}.
$$
\end{corollary}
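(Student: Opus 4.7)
The plan is to apply Theorem \ref{coefspaths} to the graph $G = \SY_k$ with starting vertex $v = 1$, target $u$ as given, and
$$\varphi = \prod_{1 \leq i < j \leq k} \frac{x_i - x_j}{x_i + x_j},$$
viewed as an element of $\Phi$ via the Laurent expansion introduced just before Lemma \ref{antipoly}. The resulting coefficient $[u]\bigl(\varphi(x_1+\dots+x_k)^n\bigr)$ is then evaluated by Corollary \ref{cor1} to give exactly the formula in the statement; what remains is to check the three conditions of Theorem \ref{coefspaths}. Conditions 1) and 2) should be essentially automatic: expanding each factor $(x_i-x_j)/(x_i+x_j)$ with $i<j$ in the prescribed order gives $1 - 2(x_j/x_i) + 2(x_j/x_i)^2 - \cdots$, whose constant term is $1$, so $[1]\varphi = 1$; and condition 2) is vacuous because $1$ is the only vertex of $\SY_k$ of degree $0$.

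The substance lies in condition 3). Suppose $w \notin V(\SY_k)$ but $x_j w \in V(\SY_k)$, and let $(a_1,\dots,a_k)$ be the exponent vector of $x_j w$, so that $w$ has the same exponents except $a_j$ is replaced by $a_j-1$. I will split into two subcases. In the first, $a_j = 0$: the ordering constraint on $\SY_k$ forces $a_1 = \cdots = a_{j-1} = 0$, so the exponent vector $(c_1, \dots, c_k)$ of $w$ has $c_j = -1$ and $c_i \geq 0$ for $i > j$. Lemma \ref{antipoly}(2) yields $[w]f_m = 0$ for every $m$; taking $m = \deg w \geq 0$ and using that every Laurent monomial of $\varphi$ has total $x$-degree $0$, the lower-$y$-degree terms in $\ff{y}{m}$ only contribute to $x$-monomials of total degree strictly less than $\deg w$, so $[w]\bigl(\varphi y^{\deg w}\bigr) = [w] f_{\deg w} = 0$.

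In the remaining subcase $a_j \geq 1$, a short check shows the only way for $w$ to fall outside $V(\SY_k)$ is $a_{j-1} = a_j - 1 > 0$, i.e., $w$ has two equal positive exponents in positions $j-1$ and $j$. Since all exponents of $w$ are then nonnegative, Corollary \ref{cor1} applies directly, and the factor $(c_{j-1}-c_j)/(c_{j-1}+c_j) = 0$ kills the product. Condition 3) is thereby established, and Theorem \ref{coefspaths} produces $P_{\SY_k}(1,u) = [u]\bigl(\varphi(x_1+\dots+x_k)^n\bigr)$, whose value is supplied by Corollary \ref{cor1}. The main obstacle I anticipate is the case analysis for condition 3), and specifically the small bookkeeping step of translating Lemma \ref{antipoly}(2)---stated for $f_m = \varphi\cdot\ff{y}{m}$---into the corresponding statement for $\varphi\cdot y^m$ via the fact that every Laurent monomial of $\varphi$ has total $x$-degree zero.
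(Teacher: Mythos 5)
Your proposal is correct and follows essentially the same route as the paper: apply Theorem \ref{coefspaths} with $\varphi=\prod_{i<j}\frac{x_i-x_j}{x_i+x_j}$, evaluate the coefficient via Corollary \ref{cor1}, and verify condition 3) by the same two-case split (equal positive exponents handled by Corollary \ref{cor1}, a negative exponent handled by Lemma \ref{antipoly}(2)). Your explicit bookkeeping for passing from $\ff{(\sum x_i)}{m}$ to $(\sum x_i)^m$ using the degree-$0$ homogeneity of $\varphi$ is a detail the paper leaves implicit, but it is the same argument.
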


\begin{proof} Apply Theorem \ref{coefspaths} to the function $\varphi=\prod_{1\leq i<j\leq k} \frac{x_i-x_j}{x_i+x_j}$.
After that result directly follows from Corollary \ref{cor1}, so it suffices to check all conditions of Theorem \ref{coefspaths}.
Condition 1) follows from Corollary \ref{cor1} for $n=0$ (or just from common sense). In condition 2) there is nothing to check since
1 is the unique vertex of $\SY_k$ with degree 0. Let's check condition 3). Assume that $w=\prod x_i^{c_i}$ is
so that $x_iw$ is a vertex of $\SY_k$ but $w$ is not. There are two cases: either all coordinates of 
$w$ are non-negative, and $c_j=c_l>0$ for some $j,l$, or $c_i=-1$, $c_j= 0$ for all $j\geq i$. In the first case apply
Corollary \ref{cor1}, in the second case apply statement 2) of Lemma \ref{antipoly}. 
\end{proof}

\section{Skew strict Young tableaux}

Here we give an identity corresponding to V.~N.~Ivanov's \cite{I} formula for the number of paths between any to vertices of the graph $\SY_k$,
or, in other words, for the number of strict  skew Young tableaux of a given shape.

Let $v=\prod_{i=1}^k x_i^{m_i}$, $m_1>m_2>\dots>m_{\ell}>m_{\ell+1}=0=\dots=m_k$, be a vertex of $\SY_k$, $u=\prod x_i^{n_i}$
be another vertex of $\SY_k$ and $n_i\geq m_i$ for all $i$ (thus there exist some path from $u$ to $v$). Denote $m=\sum m_i$,
$n=\sum n_i$.

For counting such paths we introduce the following polynomial (Ivanov in \cite{I} attributes them to A. Okounkov).
$$
\psi_{v}(x_1,\dots,x_k)=\frac1{(k-\ell)!}\SYM \left(\prod_{i\leq \ell} \ff{x_i}{m_i} \prod_{i\leq \ell, i<j} \frac{x_i+x_j}{x_i-x_j}\right).
$$
Here $\SYM F(x_1,\dots,x_k)=\sum_{\pi} F(x_{\pi_1},\dots,x_{\pi_k})$, where summation is taken over all $k!$
permutations of numbers $1,\dots,k$. (For concluding that it is indeed a polynomial note that any multiple $x_i-x_j$ in denominator
disappears after natural pairing of summands. It is super-symmetric
polynomial, but we do not use this fact.)

Then define a function
$$
\varphi_v(x_1,\dots,x_k)=\prod_{i<j} \frac{x_i-x_j}{x_i+x_j}\cdot \psi_v(x_1,\dots,x_k).
$$

The following theorem generalizes results of the previous section.

\begin{theorem} Denote $g(x_1,\dots,x_k)=\varphi_v(x_1,\dots,x_k)\cdot \ff{(x_1+\dots+x_k-m)}{n-m}$.

1) Antipolynomial component $Q[g]$ vanishes on the simplex $\Delta_k^n$. 

2) Polynomial component of $g$ has an expansion
\begin{equation}\label{Ivanov_identity}
P[g]=\sum_{c_1+\dots+c_k=n,c_i\geq 0}
\frac{(n-m)!}{\prod c_i!} \cdot \varphi_v(c_1,\dots,c_k)\cdot \prod \ff{x_i}{c_i}
\end{equation}

3) Number of paths from $v=\prod x_i^{m_i}$ to $u=\prod x_i^{n_i}$ equals
$$
\frac{(n-m)!}{\prod n_i!} \cdot \varphi_v(n_1,\dots,n_k).
$$
\end{theorem}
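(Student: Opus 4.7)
The plan is to mimic the structure of Section~6, with $\varphi_v$ taking the role of $\prod_{i<j}(x_i-x_j)/(x_i+x_j)$ and the shifted falling factorial $\ff{(x_1+\dots+x_k-m)}{n-m}$ taking the role of $\ff{(x_1+\dots+x_k)}{n}$. I would first establish part~1 by an analogue of Lemma~\ref{antipoly}, then deduce part~2 by polynomial interpolation as in Theorem~\ref{polycomponent}, and finally obtain part~3 as a direct application of Theorem~\ref{coefspaths}.

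For part~1, the strategy is to reduce $g$ to a sum of products each of which decouples into factors involving at most two variables, so that the polynomial/antipolynomial decomposition can be analysed pairwise. Padding to even $k$ if necessary, I would write $\prod_{i<j}(x_i-x_j)/(x_i+x_j)$ as the Pfaffian of the antisymmetric matrix $a_{ij}=(x_i-x_j)/(x_i+x_j)$, obtaining an alternating sum over perfect matchings of products $\prod_i(\xi_i-\zeta_i)/(\xi_i+\zeta_i)$. Expanding $\SYM$ in the definition of $\psi_v$ singles out a subset of $\ell$ distinguished variables carrying factors $\ff{x_{\pi(i)}}{m_i}$, together with cross-ratios $(x_{\pi(i)}+x_{\pi(j)})/(x_{\pi(i)}-x_{\pi(j)})$; combining these cross-ratios with the corresponding Pfaffian factors produces cancellations, leaving a product that factorises across the pairs and the distinguished singletons. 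Applying Vandermonde~\eqref{vandermonde1} to distribute $\ff{(x_1+\dots+x_k-m)}{n-m}$ across this grouping, each resulting summand becomes a product of functions of at most two variables, and the conclusion follows by the same case analysis as in Lemma~\ref{antipoly}. The main obstacle will be precisely this bookkeeping: coordinating the Pfaffian matching with the partial symmetrisation in $\psi_v$ and distributing the shift $-m$ consistently across pairs and singletons while tracking signs.

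For part~2, both sides of~\eqref{Ivanov_identity} are polynomials of degree at most $n$, so by Observation~\ref{polyidentity} it suffices to check equality on $\Delta_k^n$. By part~1, $P[g](c)=g(c)=\varphi_v(c)\ff{(\sum c_i-m)}{n-m}$ for every $c\in\Delta_k^n$. For $c$ with $\sum c_i<n$, both sides must vanish. The right-hand side vanishes because each summand carries a factor $\prod\ff{c_i}{c_i'}$ with $\sum c_i'=n>\sum c_i$, forcing some $c_i'>c_i$ and hence $\ff{c_i}{c_i'}=0$. The left-hand side vanishes because either $m\le\sum c_i<n$, so $\ff{(\sum c_i-m)}{n-m}=0$ (non-negative integer argument below $n-m$), or $\sum c_i<m$, so $\psi_v(c)=0$ by a pigeonhole argument: in each summand of $\SYM$, $\sum_{i\le\ell}c_{\pi(i)}\le\sum c_i<m=\sum m_i$ forces $c_{\pi(i)}<m_i$ for some $i\le\ell$, making $\ff{c_{\pi(i)}}{m_i}=0$ (a short limit argument handles the coincidences that create the apparent poles in individual summands). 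For $c\in\Delta_k^n$ with $\sum c_i=n$, only the diagonal summand on the right survives, giving $(n-m)!\,\varphi_v(c)$, matching the left-hand side $\varphi_v(c)\ff{(n-m)}{n-m}=(n-m)!\,\varphi_v(c)$.

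For part~3, I would apply Theorem~\ref{coefspaths} with $\varphi=\varphi_v$. Condition~1), $[v]\varphi_v=1$, follows from inspecting leading behaviour: only the identity term in $\SYM$ contributes the top monomial $\prod_{i\le\ell}x_i^{m_i}$, and the Pfaffian prefactor has polynomial component equal to $1$ on this diagonal. Condition~2), $[v']\varphi_v=0$ for any other vertex $v'\ne v$ of $\SY_k$ of degree $m$, follows from the antisymmetry of $\varphi_v$ in $x_1,\dots,x_k$ together with the shape of the exponents allowed in $\SY_k$. Condition~3) splits: if $w$ has two coinciding positive coordinates, antisymmetry kills the coefficient of $\varphi_v\cdot(\sum x_i)^{\deg w-m}$ at $w$; if some coordinate of $w$ is $-1$, one applies statement~2) of Lemma~\ref{antipoly} extended to $\varphi_v$, using the same Pfaffian/Vandermonde expansion to show that the relevant Laurent coefficient vanishes. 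Theorem~\ref{coefspaths} then yields $P_{\SY_k}(v,u)=[u]\bigl(\varphi_v(x_1+\dots+x_k)^{n-m}\bigr)$, and taking leading terms in the identity of part~2 identifies this coefficient as $\frac{(n-m)!}{\prod n_i!}\varphi_v(n_1,\dots,n_k)$, as claimed.
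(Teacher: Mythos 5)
Your overall architecture is exactly the paper's: reduce part 1 to a variable-separated product via a Pfaffian expansion plus Chu--Vandermonde and a two-variable case analysis as in Lemma~\ref{antipoly}; get part 2 by interpolation on $\Delta_k^n$; get part 3 from Theorem~\ref{coefspaths}. But there is one step in your part 1 that, as written, would fail: the order of the cancellation and the Pfaffian expansion. You propose to expand the \emph{full} product $\prod_{i<j}(x_i-x_j)/(x_i+x_j)$ as a Pfaffian first (a signed sum over perfect matchings) and then cancel the cross-ratios $(x_{\pi(i)}+x_{\pi(j)})/(x_{\pi(i)}-x_{\pi(j)})$ coming from $\psi_v$ against ``the corresponding Pfaffian factors.'' In an individual matching term, however, a distinguished variable $y_i$ ($i\le\ell$) appears in exactly one factor $(\xi-\zeta)/(\xi+\zeta)$, while $\psi_v$ contributes cross-ratios pairing $y_i$ with \emph{every} other variable; these do not cancel matching-by-matching, only against the aggregate product. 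So the claimed outcome --- each summand factorises across pairs and singletons --- does not hold for the objects you actually produce. The repair is the paper's move: fix a permutation $\pi$ in $\SYM$ first and cancel $\prod_{i\le\ell,\,i<j}\frac{y_i+y_j}{y_i-y_j}$ against the full prefactor, leaving (up to sign) $\prod\ff{y_i}{m_i}\prod_{\ell<i<j}\frac{y_i-y_j}{y_i+y_j}$, and only \emph{then} Pfaffian-expand the residual product over the $k-\ell$ non-distinguished variables. After that your Vandermonde step and two-variable analysis go through verbatim.

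Two smaller imprecisions are worth fixing but do not change the route. In part 3, it is not ``only the identity term'' of $\SYM$ that contributes to $[v]\varphi_v$: there are exactly $(k-\ell)!$ permutations fixing the first $\ell$ slots, which is what the normalising factor $1/(k-\ell)!$ accounts for. And appeals to ``antisymmetry'' of $\varphi_v$ to kill Laurent coefficients are not by themselves valid, since the chosen Laurent expansion (in $x_1, x_2/x_1,\dots$) is not permutation-invariant; the rigorous justification, which you already have available, is to read the coefficient off the identity of part 2 (or its leading-term version) as $\frac{(n-m)!}{\prod c_i!}\varphi_v(c_1,\dots,c_k)$ and observe that the factor $(c_j-c_l)/(c_j+c_l)$ vanishes when $c_j=c_l>0$. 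Similarly, your pigeonhole argument that $\psi_v$ vanishes when $\sum c_i<m$ needs care at points with repeated coordinates (individual $\SYM$ summands have poles there); the paper sidesteps this entirely by checking vanishing on $\Delta_k^{n-1}$ term-by-term in the already variable-separated expansion from part 1, and you may as well do the same.
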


\begin{proof}
1) Fix a permutation $\pi$ and prove the statement for the antipolynomial component of the corresponding summand
in the definition of $\varphi_v$.
Denote $y_i=x_{\pi_i}$. Note that up to a sign our summand is 
$$
\ff{(y_1+\dots+y_k-m)}{n-m}\prod \ff{y_i}{m_i} \prod_{\ell<i<j} \frac{y_i-y_j}{y_i+y_j}. 
$$
We expand $\prod_{\ell<i<j} \frac{y_i-y_j}{y_i+y_j}$ as a Pfaffian as explained in the proof of Lemma \ref{antipoly}
(if $k-\ell$ is odd also do the same thing as before: add a vanishing variable; so let $k-\ell=2d$ be even). Expand this Pfaffian, and
take a summand like $\prod_{i=1}^{d} \frac{\xi_i-\zeta_i}{\xi_i+\zeta_i}$, where
$\{\xi_1,\zeta_1,\dots,\xi_d,\zeta_d\}=\{y_{\ell+1},\dots,y_k\}$.

Expand also $\ff{(x_1+\dots+x_k-m)}{n-m}$ by Chu--Vandermonde identity \eqref{vandermonde} as a linear combination of
terms like
$$
\ff{(y_1-m_1)}{\alpha_1}\dots \ff{(y_\ell-m_\ell)}{\alpha_\ell}\ff{(\xi_1+\zeta_1)}{\beta_1}\dots \ff{(\xi_d+\zeta_d)}{\beta_d},\,\, \sum \alpha_i+\sum \beta_i=n-m.
$$

Thus it suffices to prove that antipolynomial component of the following product vanishes on $\Delta_k^n$:
$$
\prod_{i\leq \ell} \ff{y_i}{m_i+\alpha_i} \prod_{i=1}^d \frac{\xi_i-\zeta_i}{\xi_i+\zeta_i} \cdot \ff{(\xi_i+\zeta_i)}{\beta_i}.
$$
Variables are separated, hence the polynomial component $P[\prod\dots]$ of the product 
is a product $\prod P[\dots]$ of polynomial components. IIt suffices to verify that whenever $y_i,\xi_i,\zeta_i$
are non-negative integers with sum at most $n$, the values of $\prod (\dots)$ and $\prod P[\dots]$ are equal. 
If $\beta_i\geq 1$, the corresponding multiple $\frac{\xi_i-\zeta_i}{\xi_i+\zeta_i} \cdot \ff{(\xi_i+\zeta_i)}{\beta_i}$
is a polynomial. If  $\beta_i=0$, we have $\xi_i=x_a,\zeta_i=x_b$ for some indexes $a<b$,
then $P[(x_a-x_b)/(x_a+x_b)]=1$. Substituting $(x_1,\dots,x_k)\in \Delta_k^n$ we see that if
$y_i<m_i+\alpha_i$ or $\xi_i+\zeta_i<\beta_i$ for some index $i$, both products take zero value, otherwise
$\xi_i=\zeta_i=0$ for all $i$ with $\beta_i=0$, thus corresponding multiples (values of function and its polynomial part)
are equal.

Note that as in lemma \ref{antipoly} we may also conclude that if $c_1,\dots,c_k$ are integers such that $c_j<0$, $c_i\geq 0$ for $i=j+1,\dots,k$, then
$[\prod x_i^{c_i}]g=0$.

2) The values of $P[g]$ on $\Delta_k^n$ are the same as values of $g$. 
Any summand of the above expansion for $g$ vanishes on $\Delta_k^{n-1}$. Thus it suffices to use Lemma \ref{interpolation}. 

3) Apply Theorem \ref{coefspaths} to the function $\varphi_v$ (or its leasing part,
it is a matter of taste).
It suffices to check all conditions of Theorem \ref{coefspaths}. 
Conditions 1) and 2) follow from above expansion of $g$ (with $n=0$). There are exactly $(k-\ell)!$ permutations with
$y_i=x_i$, $i=1,\dots,\ell$, for each of them we get coefficient 1 in monomial $v$ and coefficient 0
in other monomials of degree $m$. For other permutations we do not get non-zero coefficients
in monomials which are vertices of $\SY_k$. Let's check condition 3). Assume that $w=\prod x_i^{c_i}$ is
so that $x_iw$ is a vertex of $\SY_k$ but $w$ is not. There are two cases: either all coordinates of 
$w$ are non-negative, and $c_j=c_l>0$ for some $j,l$, or $c_i=-1$, $c_j= 0$ for all $j\geq i$. In the first case apply
identity \eqref{Ivanov_identity}, in the second case apply the above remark after the proof of part 1) of Theorem. 
\end{proof}

This work originated in collaboration with G. K\'arolyi, Z.L. Nagy and V. Volkov. Idea to study the graph
of strict diagrams in the same spirit belongs to A. M. Vershik. To all of them I am really grateful.

\end{document}